\newtheorem{theorem}{Theorem}[section]
\newtheorem{lemma}[theorem]{Lemma}
\def\irr#1{{\rm  Irr}(#1)}
\def\cd#1{{\rm  cd}(#1)}
\def\phi{{\varphi}}
\begin{document}

\title[Character degrees]{Fitting heights of solvable groups with no nontrivial prime power character degrees}
\author[Mark L. Lewis]{Mark L. Lewis}

\address{Department of Mathematical Sciences, Kent State University, Kent, OH 44242}
\email{lewis@math.kent.edu}

\subjclass[2010]{ 20C15.  Secondary: 20D10}
\keywords{character degrees, solvable groups, Fitting heights, Sylow towers }

\begin{abstract}
We construct solvable groups where the only degree of an irreducible character that is a prime power is $1$ that have arbitrarily large Fitting heights.  We will show that we can construct such groups that also have a Sylow tower.  We also will show that we can construct such groups using only three primes.
\end{abstract}

\maketitle

%Notes (Mark L. Lewis) - May-June 2015

\section{Introduction}

%The purpose of this note is to construct solvable CDG groups of arbitrarily large Fitting height.

Throughout this paper, all groups are finite, and if $G$ is a group, then we write $\irr G$ for the irreducible characters of $G$ and $\cd G = \{ \chi (1) \mid \chi \in \irr G \}$ are the character degrees of $G$.

In the paper \cite{DuLe}, we said that a group $G$ is a {\it composite degree group} (CDG for short) if $1$ is the only prime power that lies in $\cd G$.  Solvable groups satisfying this condition had earlier been studied in the paper \cite{Meetal}.  Examples of solvable CDGs can be found in Example 3.4 of \cite{Meetal} and in Section 4 of \cite{DuLe}.  We mentioned in Section 4 of \cite{DuLe} that we did not know of any examples of solvable CDGs that had Fitting height larger than $3$.  We now remedy this by presenting CDGs with arbitrarily large Fitting heights.

\begin{theorem} \label{main}
Let $l > 1$ be an integer.  Then there exists a solvable CDG $G$ such that the Fitting height of $G$ is $l$.
\end{theorem}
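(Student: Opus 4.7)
The plan is to proceed by induction on $l$. The base cases $l = 2, 3$ are covered by the solvable CDG examples already in the literature (Example~3.4 of \cite{Meetal} and Section~4 of \cite{DuLe}), so I focus on the inductive step. Assume a solvable CDG $H = G_{l-1}$ of Fitting height $l-1$ has been constructed. I would choose a prime $p$ not dividing $|H|$ and a faithful $\mathbb{F}_p[H]$-module $V$, and form $G_l = V \rtimes H$. Because the action of $H$ on $V$ is faithful and coprime, $V$ is the full Fitting subgroup of $G_l$, and $G_l/F(G_l) \cong H$ has Fitting height $l-1$ by induction, giving Fitting height exactly $l$.

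The substantive task is to choose $V$ so that $G_l$ remains a CDG. By Clifford theory, each $\chi \in \irr{G_l}$ either has $V \le \ker{\chi}$, in which case $\chi$ inflates from $H$ and has CDG-valid degree by induction, or lies above some nontrivial $\lambda \in \irr V$, in which case $\chi(1) = [H : \stab{H}{\lambda}] \, \psi(1)$ for a Clifford correspondent $\psi$ in the inertia subgroup. For the CDG property at level $l$ I therefore need that for every nontrivial $H$-orbit on $\irr V$, the product $[H : \stab{H}{\lambda}] \, \psi(1)$ has at least two distinct prime divisors. A natural way to secure this is to take $V$ as (or inside) an induced module $W^H$ from a subgroup $K \le H$ chosen so that $[H:K]$ is divisible by two primes in $|H|$; this forces every nontrivial orbit length to be divisible by two primes and hence already ensures $\chi(1)$ is not a prime power, regardless of $\psi(1)$.

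The main obstacle, which I expect to consume most of the work, is the coordinated choice of primes and modules across all $l-1$ inductive steps. One must fix in advance a strictly increasing sequence $p_1 < p_2 < \cdots < p_l$ of distinct primes and, at each step $k$, choose $V_k$ so that the $G_{k-1}$-orbits on $\irr{V_k}$ pick up divisibility by at least two of the earlier primes $p_1, \dots, p_{k-1}$. Since the orbit lengths must grow with $l$ while the action at each level stays faithful, the dimensions of the $V_k$ will likely grow rapidly, but this is consistent with producing groups of arbitrarily large Fitting height. Once this arithmetic setup is in place, the verification that $G_l$ is a CDG of Fitting height exactly $l$ reduces to a routine application of Clifford theory combined with the coprime-action identification $V = F(G_l)$.
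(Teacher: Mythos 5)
There is a genuine gap, and it sits exactly where you predicted the work would be: in arranging that every character of $G_l = V \rtimes H$ lying over a nontrivial $\lambda \in \irr V$ has composite degree. Your proposed mechanism --- take $V$ inside an induced module $W^H$ from $K \le H$ with $[H:K]$ divisible by two primes, so that ``every nontrivial orbit length is divisible by two primes'' --- does not work. The stabilizer of a nonzero element of $W^H$ (or of a nontrivial character of $V$; note you need to control orbits on $\irr V \cong \hat V$, not on $V$) need not be contained in any conjugate of $K$: an element supported on several cosets of $K$ is stabilized by the setwise stabilizer of its support together with whatever fixes the entries, and this can be much larger than $K$ --- indeed $H$-invariant nonzero vectors can occur. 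So orbit lengths need not be divisible by $[H:K]$, and a nontrivial orbit of prime power length $s^a$ together with a linear character of the inertia subgroup (which always exists) produces $\chi(1) = s^a$, destroying the CDG property. Nothing in the sketch rules this out, and since $1 \in \cd{H_\lambda}$ always, you genuinely need every nontrivial orbit on $\irr V$ to have length either $1$ or divisible by two distinct primes --- a condition the induced-module device does not deliver.

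The paper's construction is designed precisely to avoid having to control orbit lengths to that extent. Instead of an abelian module $V$, it uses the extra-special group $E = E(V)$ of Isaacs's dual-module construction and forms $G = E \rtimes (H \times Z_r)$, where $Z_r$ acts Frobeniusly on $E$ (Lemma \ref{extra}, with $r$ an odd prime dividing $p-1$). The Frobenius action guarantees $r \mid \chi(1)$ for every $\chi \in \irr G$ not containing $E$ in its kernel, so only \emph{one} further prime divisor is needed, and it comes cheaply: if $E' \not\le \ker\chi$ then $p \mid \chi(1)$ because $E$ is extra-special; if $E' \le \ker\chi$ but $E \not\le \ker\chi$, then $\chi$ lies over a linear $\lambda$ of $E/E'$, and since the unique minimal normal subgroup $N$ of $H$ satisfies $C_{E/E'}(N) = 1$ (via Lemma \ref{coprime} applied to $\hat V$), Clifford theory gives $q \mid [N : N_\lambda] \mid \chi(1)$. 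Note also that the paper's induction carries the invariant ``unique minimal normal subgroup'' (to name the prime $q$ at the next stage) and verifies $F(G) = E$; your sketch maintains no analogous invariant. If you want to salvage an abelian-module approach you would have to prove the strong orbit condition above for some module over an arbitrary CDG $H$, which is the hard part of the problem, not a routine afterthought.
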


We will show that the groups in Theorem \ref{main} can be chosen to have a Sylow tower.  It is not difficult to see that if $G$ is a CDG, then $|G|$ must be divisible by three primes.  We will show that there exist solvable CDGs with arbitrarily large derived length whose orders are divisible by only three primes.

I would like to thank Tom Wolf, Hung Tong-Viet, Hung Ngoc Nguyen, and Ni Du for several helpful conversations while I was working on this.

\section{Modules and extra-special groups}

In this section, we construct extra-special groups whose quotients are modules for other groups.

We begin by reviewing a construction that can be found in \cite{dual} among other places.  Let $k$ be a field, and let $V$ be a finite dimensional vector space for $k$.  Write $\hat V$ for the dual vector space for $V$.  That is, $\hat V$ is the set of $k$-linear transformations from $V$ to $k$.  Let $\{ e_1, \dots, e_n \}$ be a basis for $V$, and define $\lambda_i : V \rightarrow k$ by $\lambda_i (e_j) = \delta_{ij}$ where $\delta_{ij}$ is the Kronecker delta and then extending linearly.  It is not difficult to see that $\{ \lambda_1, \dots, \lambda_n \}$ forms a basis for $\hat V$.  We now define the group $E (V)$ as follows: let $E(V) = \{ (v,\alpha,z) \mid v \in V, \alpha \in \hat V, z \in k \}$ and we define multiplication in $E (V)$ by
$$
(v_1,\alpha_1,z_1) (v_2,\alpha_2,z_2) = (v_1 + v_2, \alpha_1 + \alpha_2, z_1 + z_2 + \alpha_2 (v_1) ).
$$
It can be checked that $E(V)$ is a group.  One can show that $\{ (v, 0, z) \mid v \in V, z \in k \}$ and $\{ (0, \alpha, z ) \mid \alpha \in \hat V, z \in k \}$ are normal abelian subgroups whose product is $E(V)$ and whose intersection is $ \{ (0,0,z) \mid z \in k \}$.  Also, one can show that the commutators $[(e_i,0,0),(0,\lambda_j,0)] = (0,0,\lambda_j (e_i)) = (0,0,\delta_{ij})$.  It follows that $E(V)' = Z(E(V)) = \{ (0,0,z) \mid z \in k \}$.  In the case where $k$ has order $p$ for some prime $p$, it now follows that $E (V)$ is an extra-special group of order $p^{2n +1}$.

Suppose that $G$ is a group and $k$ is a field of prime order, and suppose that $V$ is a finite dimensional $k[G]$-module.  It is not difficult to see that $\hat V$ will also be a $k[G]$-module where $\alpha \cdot g$ for $\alpha \in \hat V$ and $g \in G$ is defined by $\alpha \cdot g (v \cdot g) = \alpha (v)$.  One can now see that $G$ acts on $E (V)$ by $(v,\alpha,z) \cdot g = (v \cdot g, \alpha \cdot g, z)$.  One can observe that this action is an action by automorphisms that centralizes $Z (E(V))$.

Suppose that $x \in k$ is a nonzero element of the field $k$.  We define a map $\sigma_x$ on $E (V)$ by $(v,\alpha,z) \sigma_x = ( xv, x\alpha, x^2 z)$.  It is not difficult to see that $\sigma_x$ will be an automorphism of $E(V)$ and that the order of $\sigma_x$ equals the multiplicative order of $x$ in $k$.  In addition, $\langle \sigma_x \rangle$ acts Frobeniusly on $E (V)/E (V)'$ and if the order of $x$ is odd, then in fact $\langle \sigma_x \rangle$ acts Frobeniusly on $E (V)$.  Finally, it is not difficult to see that $\sigma_x$ commutes with the action of $G$ since $(xv)\cdot g = x (v \cdot g)$ and $(x \alpha)\cdot g = x (\alpha \cdot g)$ for $v \in V$, $\alpha \in \hat V$, and $g \in G$.

Thus, we have proved the following:

\begin{lemma}\label{extra}
If $G$ is a group, $k$ is a field of prime order, and $V$ is a finite dimensional $k[G]$-module, then $G$ acts on $E (V)$ via automorphism such that $Z(E(V))$ is centralized.  Furthermore, if $m$ divides $|k|- 1$, then $E (V)$ has an automorphism of order $m$ that commutes with the action of $G$ and acts Frobeniusly on $E(V)/E(V)'$ and if $m$ is odd, this automorphism can be taken to act Frobeniusly on $E(V)$.
\end{lemma}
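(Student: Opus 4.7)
The plan is to assemble pieces already spelled out in the discussion preceding the lemma and verify the remaining points directly. For the first assertion, I would take as the action $(v,\alpha,z)\cdot g = (v\cdot g, \alpha\cdot g, z)$; verifying that this is an action by automorphisms reduces to multiplying two generic elements via the rule $(v_1,\alpha_1,z_1)(v_2,\alpha_2,z_2)=(v_1+v_2,\alpha_1+\alpha_2,z_1+z_2+\alpha_2(v_1))$, applying $g$ coordinate-wise, and using the defining identity $(\alpha\cdot g)(v\cdot g)=\alpha(v)$ together with $k$-linearity of the $G$-action on $V$ and $\hat V$ to see that the product rule is preserved. Since the third coordinate is never touched, $Z(E(V))=\{(0,0,z)\mid z\in k\}$ is centralized.

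For the second assertion, I would choose $x$ of multiplicative order exactly $m$ in $k^{\times}$; such an $x$ exists because $k^{\times}$ is cyclic of order $|k|-1$ and $m$ divides $|k|-1$. The map $\sigma_x\colon (v,\alpha,z)\mapsto (xv,x\alpha,x^2 z)$ is a homomorphism by a direct calculation using the identity $(x\alpha_2)(xv_1)=x^2\alpha_2(v_1)$, its inverse is $\sigma_{x^{-1}}$, and its order equals the order of $x$. Commutation with the $G$-action is immediate because scalars in $k$ commute with the $G$-action on $V$ and on $\hat V$, so the coordinate formulas for $\sigma_x$ and for $g$ evidently commute.

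The substantive step (and the source of the parity hypothesis) is the Frobenius check. On the quotient $E(V)/E(V)' \cong V\oplus\hat V$, the automorphism $\sigma_x^i$ acts as the scalar $x^i$, which fixes a nonzero vector only when $m\mid i$; hence $\langle\sigma_x\rangle$ acts Frobeniusly on the quotient. On $E(V)$ itself, the only possible additional fixed points lie in $Z(E(V))$, where $\sigma_x^i$ acts as $x^{2i}$, and a nonzero central element is fixed precisely when $m\mid 2i$. When $m$ is odd, $m\mid 2i$ forces $m\mid i$, yielding a Frobenius action on all of $E(V)$; when $m$ is even the power $i=m/2$ fixes the center pointwise, which is exactly why the stronger conclusion requires odd $m$. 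The construction being entirely spelled out beforehand, the only real obstacle I anticipate is bookkeeping the two separate fixed-point calculations on the quotient and on the center and noting the asymmetry in scalar powers ($x$ versus $x^2$) that produces the parity condition.
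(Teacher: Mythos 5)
Your proposal is correct and follows the same route as the paper: the paper's ``proof'' is precisely the discussion preceding the lemma, which introduces the action $(v,\alpha,z)\cdot g=(v\cdot g,\alpha\cdot g,z)$ and the automorphism $\sigma_x:(v,\alpha,z)\mapsto(xv,x\alpha,x^2z)$ and asserts the same properties you verify. Your write-up actually supplies more detail than the paper does on the one substantive point, namely the fixed-point computation that isolates the scalar $x^2$ on the center and hence the parity condition on $m$.
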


Before leaving this section, we consider how the action of $G$ on $V$ determines the action of $G$ on $\hat V$ when the action is coprime.

\begin{lemma}\label{coprime}
Suppose that $p$ is a prime, $G$ is a $p'$-group, and $k$ is the field of order $p$.  If $V$ is a finite dimensional $k[G]$-module, so that $C_V (G) = 0$, then $C_{\hat V} (G) = 0$.
\end{lemma}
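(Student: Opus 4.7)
The plan is to combine Maschke's theorem with a short duality argument applied to irreducible summands. Since $G$ is a $p'$-group and $k$ has characteristic $p$, the group algebra $k[G]$ is semisimple, so by Maschke's theorem I can write $V = V_1 \oplus \cdots \oplus V_r$ as a direct sum of irreducible $k[G]$-submodules. The hypothesis $C_V(G) = 0$ rules out any trivial summand, so each $V_i$ is a nontrivial irreducible module.

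Next I would note that dualization commutes with direct sums and preserves irreducibility, yielding $\hat V \cong \hat V_1 \oplus \cdots \oplus \hat V_r$ as $k[G]$-modules with each $\hat V_i$ irreducible. Consequently $C_{\hat V}(G) = \bigoplus_i C_{\hat V_i}(G)$, and the claim reduces to showing that if $W$ is a nontrivial irreducible $k[G]$-module then $\hat W$ is nontrivial.

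For that reduction I would use the canonical evaluation map $W \to \hat{\hat W}$ sending $v$ to the functional $\alpha \mapsto \alpha(v)$. A direct check from the given formula $(\alpha \cdot g)(v \cdot g) = \alpha(v)$ shows both that this map is a $k[G]$-isomorphism in finite dimensions and that the dual of a module on which $G$ acts trivially again has trivial $G$-action. Hence if $\hat W$ were trivial then $\hat{\hat W}$ would be trivial, forcing $W$ to be trivial, a contradiction.

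I do not anticipate a serious obstacle here; the content is really just semisimplicity from Maschke's theorem plus the standard fact that an irreducible module is trivial if and only if its dual is. The only mild care required is in verifying the $G$-equivariance of the evaluation map from the (right-action) definition of the dual module structure used earlier in the section. As an alternative, one could avoid double duals by observing directly that the submodule $U = \langle v \cdot g - v : v \in V, g \in G \rangle$ is $G$-invariant, that $C_{\hat V}(G) = (V/U)^*$, and that any complement to $U$ given by Maschke's theorem lies in $C_V(G) = 0$, so $U = V$ and $C_{\hat V}(G) = 0$.
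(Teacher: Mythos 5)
Your proof is correct, and your primary route is genuinely different from the paper's. The paper does not decompose $V$ into irreducibles at all: it invokes Fitting's theorem for the coprime action to write $V = C_V(G) \oplus [V,G]$ with $[V,G] = \{\, v - v^g \mid v \in V,\ g \in G \,\}$, concludes $V = [V,G]$ from the hypothesis, and then observes directly from the definition $(\phi \cdot g)(v \cdot g) = \phi(v)$ that any $G$-fixed functional satisfies $\phi(v - v^g) = 0$ and hence vanishes on all of $V$. That is a three-line computation with no appeal to semisimplicity, irreducibility of duals, or double duality. Your main argument instead runs through Maschke's theorem, the fact that dualization preserves irreducibility (which itself rests on the annihilator/double-dual correspondence you only gesture at), and the canonical $G$-isomorphism $W \cong \hat{\hat W}$; every step is valid, and you correctly identify the one point needing care, namely the $G$-equivariance of the evaluation map under the right-action convention $(\alpha \cdot g)(v \cdot g) = \alpha(v)$. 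Still, the machinery is heavier than the problem requires. Notably, the alternative you sketch in your final sentence --- identifying $C_{\hat V}(G)$ with $(V/U)^*$ for $U = [V,G]$ and using a Maschke complement, necessarily trivial as a $G$-module and hence contained in $C_V(G) = 0$, to force $U = V$ --- is essentially the paper's proof, with Maschke playing the role that Fitting's theorem plays there. Each approach buys something: the paper's is shorter and nearly computation-free, while yours makes transparent the structural reason the lemma holds, namely that $V$ and $\hat V$ have the same multiplicity of trivial constituents.
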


\begin{proof}
We begin by noting that $G$ acts coprimely on $V$, so we can apply Fitting's theorem to see that $V = C_V (G) \oplus [V,G]$ where $[V,G] = \{ v - v^g \mid v \in V, g \in G \}$.  The assumption that $C_V (G) = 0$ implies that $V = [V,G]$.  Suppose that $\phi \in C_{\hat V} (G)$.  We then have $\phi (v) = \phi^g (v^g) = \phi (v^g)$ for all $v \in V$ and $g \in G$.  This implies that $\phi (v - v^g) = \phi (v) - \phi (v^g) = 0$ for all $v \in V$ and $g \in G$.  Since $V = [V,G]$, this implies that $\phi (V) = 0$, and so, $\phi = 0$.  We conclude that $C_{\hat V} (G) = 0$.
\end{proof}

\section{Construction}

The following theorem encodes our key construction.

\begin{theorem} \label{construction}
Let $H$ be a CDG with a unique minimal normal subgroup $N$ and assume that $N$ is a $q$-group for some prime $q$.  Let $p$ be a prime different from $q$ so that $p - 1$ is divisible by an odd prime $r$ that is different from $q$.   Then there exists an extra-special $p$-group $E$ so that if $G = E \rtimes (H \times Z_r)$, then $G$ is a CDG, $F (G) = E$, and $Z(E)$ is the unique minimal normal subgroup of $G$.  In particular, if $H$ is solvable, then the Fitting height of $G$ is one more than the Fitting height of $H$.
\end{theorem}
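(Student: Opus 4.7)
The plan is to take $V$ to be a faithful $\mathbb{F}_p[H]$-module satisfying $C_V(H) = 0$ (for example, the augmentation ideal of the regular module, or the sum of all nontrivial irreducibles), apply Lemma~\ref{extra} to build the extra-special $p$-group $E = E(V)$ together with commuting actions of $H$ and of $Z_r = \langle \sigma_x \rangle$ with $x \in \mathbb{F}_p^\times$ of order $r$ (since $r$ is odd, the $Z_r$-action is Frobenius on all of $E$), and set $G = E \rtimes (H \times Z_r)$. To keep the coprime-action arguments clean I would also arrange $p \nmid |H|$ and $r \nmid |H|$; this costs nothing since $p$ is only constrained by $r \mid p-1$, so $p$ may be taken arbitrarily large.

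For $F(G) = E$, since $H \times Z_r$ is a $p'$-group acting faithfully on the $p$-group $E$ and the inner automorphism group of $E$ is a $p$-group, no nontrivial element of $H \times Z_r$ induces an inner automorphism on $E$, so $C_G(E) = Z(E)$. If $M \trianglelefteq G$ is nilpotent, its Hall $p'$-part lies in $C_G(E) = Z(E)$ and is trivial, while its $p$-part projects to a normal $p$-subgroup of the $p'$-group $G/E$ and is therefore contained in $E$. For uniqueness of the minimal normal subgroup: a minimal normal $M$ with $M \cap E = 1$ satisfies $[M,E] \le M \cap E = 1$, so $M \le C_G(E) = Z(E) \le E$, a contradiction; while $M \cap E \neq 1$ forces $M \le E$, and as a normal abelian subgroup of the extra-special $E$ it meets $Z(E)$, forcing $M = Z(E)$. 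The Fitting-height statement is immediate because $G/E \cong H \times Z_r$ and $Z_r$ is abelian.

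I would verify the CDG property by splitting $\irr G$ into three classes according to behavior on $E$ and $Z(E)$. Characters trivial on $E$ are inflated from $G/E \cong H \times Z_r$, with degrees in $\cd H$; by hypothesis this contains no nontrivial prime power. Characters nontrivial on $Z(E)$ lie over one of the faithful irreducibles $\theta_\lambda \in \irr E$ of degree $p^n$; since $H$ centralizes $Z(E)$ while $Z_r$ acts Frobeniusly on $Z(E)$, the stabilizer in $G$ of $\theta_\lambda$ is $E \rtimes H$, and coprime extension of $\theta_\lambda$ followed by induction to $G$ yields degrees of the form $r \cdot p^n \cdot \beta(1)$, always divisible by the two distinct primes $r$ and $p$ and so never a prime power.

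The main obstacle is the remaining class: characters trivial on $Z(E)$ but nontrivial on $E$, parametrized by nontrivial $\mu$ in the dual group of $A := V \oplus \hat V = E/Z(E)$. Applying Lemma~\ref{coprime} in both directions (using $p \nmid |H|$) shows $C_{\hat A}(H) = 0$, and combined with Frobenius action of $Z_r$ on $A$, the $(H \times Z_r)$-stabilizer $S$ of every nontrivial $\mu$ is a proper subgroup of $H$. The linear $\mu$ extends to $A \rtimes S$ via $\tilde\mu(a, s) = \mu(a)$ (well-defined because $\mu$ is $S$-invariant), and Gallagher's theorem gives the degrees over $\mu$ in $G/Z(E)$ as $r \cdot [H:S] \cdot \beta(1)$ for $\beta \in \irr S$. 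Because $r \nmid |H|$ and $[H:S] > 1$, this product has $r$ as one prime divisor and a distinct prime divisor coming from $[H:S]$, so it is never a nontrivial prime power. Securing both that nontrivial $\mu$ have proper stabilizer in $H$ and that $[H:S]$ contributes a prime distinct from $r$ is the crux of the argument, and it is what dictates the coprimality conditions I would place on $V$, $p$, and $r$.
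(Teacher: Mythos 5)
Your overall blueprint matches the paper's: build $E=E(V)$ via Lemma~\ref{extra}, let $H\times Z_r$ act with $Z_r$ acting Frobeniusly, and split $\irr G$ according to whether the kernel contains $E$, contains $E'$ but not $E$, or fails to contain $E'$. Your treatments of $F(G)=E$ and of the uniqueness of the minimal normal subgroup are fine (indeed cleaner than the paper's) \emph{given} your added hypotheses. The difficulty is exactly those added hypotheses: you assume $p\nmid |H|$ and $r\nmid |H|$, but in the statement $p$ and $r$ are given, not chosen, so you prove a strictly weaker theorem. This is not a harmless loss of generality for this paper: the three-primes theorem at the end of Section 3 applies Theorem~\ref{construction} with the \emph{same} triple of primes at every inductive step, so from the second step on both $p$ and $r$ divide $|H|$, and your version of the construction theorem would not support that application.

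The coprimality assumptions are doing real work in your treatment of the critical third class (characters trivial on $E'=Z(E)$ but not on $E$). You assert that the stabilizer $S$ in $H\times Z_r$ of a nontrivial $\mu$ is a proper subgroup of $H$ and deduce $\chi(1)=r\,[H:S]\,\beta(1)$. The Frobenius action of $Z_r$ only yields $S\cap Z_r=1$; it does not yield $S\le H$, since $S$ could be a diagonal subgroup $\{(h,\phi(h))\}$ over a subgroup of $H$ admitting a nontrivial homomorphism $\phi$ to $Z_r$ (concretely, $h$ could carry $\mu$ to a nontrivial $Z_r$-translate of itself). You exclude this only because $r\nmid|H|$ kills all such homomorphisms; similarly, $[H:S]\beta(1)$ being coprime to $r$ (so the product is not a power of $r$) uses $r\nmid|H|$, and your appeal to Lemma~\ref{coprime} with the whole of $H$ uses $p\nmid|H|$. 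Tellingly, your argument never invokes the hypotheses that $N$ is a $q$-group and $r\ne q$, which are what drive the paper's proof. The paper localizes the stabilizer computation to $N$ rather than $H$: it only needs $C_V(N)=0$ (coprimality is automatic since $N$ is a $q$-group and $q\ne p$), obtains $q\mid\chi(1)$ from Clifford's theorem applied to $E\trianglelefteq EN\trianglelefteq G$, and obtains $r\mid\chi(1)$ independently from the Frobenius group $EZ_r\trianglelefteq G$; since $q\ne r$, such degrees are never prime powers, with no constraint relating $p$ or $r$ to $|H|$. Replacing your $H$-stabilizer analysis by this $N$-stabilizer analysis (and choosing $V$ with $C_V(N)=0$ instead of $C_V(H)=0$) repairs the argument.
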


\begin{proof}
Let $V$ be an $H$-module of characteristic $p$ such that $C_V (N) = 0$.  Note that $|V|$ and $|N|$ are coprime.  Let $E = E(V)$.  As we saw in the previous section, $E$ is an extra-special $p$-group, and we define the action of $H$ on $E$ as in that section.  Since $r$ divides $p - 1$, we see that $k$ contains an element $x$ whose multiplicative order is $r$.  Thus, $\langle \sigma_x \rangle \cong Z_r$ where $\sigma_x$ is defined as in the previous section, and using that section we can define an action of $Z_r$ on $E$.  Since $r$ is odd, we see that $Z_r$ acts Frobeniusly on $E$.  We also saw that the action of $H$ and $\sigma_x$ on $E$ commute; so in fact, we have an action of $H \times Z_r$ on $E$, and we take $G = E \rtimes (H \times Z_r)$ under this action.

We first prove that $G$ is a CDG.  Since $H$ is a CDG, it follows that $G/E \cong H \times Z_r$ is a CDG.  Thus, it suffices to show that the characters in $\irr G$ that do not have $E$ in their kernel do not have prime power degree.  Since $E Z_r$ is a Frobenius group, it follows that $r$ divides the degree of every irreducible character of $E Z_r$ whose kernel does not contain $E$, and this implies that $r$ divides the degree of every irreducible character of $G$ whose kernel does not contain $E$.  Since $E$ is an extra-special $p$-group, $p$ will divide the degree of every irreducible character of $G$ whose kernel does not contain $E'$.  Thus, we need only consider those irreducible characters of $G$ whose kernels do not contain $E$ but do contain $E'$.  Let $\chi$ be such a character of $G$ and let $\lambda$ be an irreducible constituent of $\chi_E$, and since $E'$ is contained in the kernel of $\chi$, we conclude that $\lambda$ is linear.

Since $C_V (N) = 0$, we may use Lemma \ref{coprime} to see that $C_{\hat V} (N) = 0$.  Now, $E/E'$ is the direct sum of two $N$-modules whose centralizers of $N$ are trivial, so $C_{E/E'} (N) = 1$.  It follows that $N_\lambda < N$, and applying Clifford's theorem, we have that $|N:N_\lambda|$ divides the degree of every irreducible constituent of $\lambda^N$, and hence, $q$ divides $\chi (1)$.  This proves that $G$ is a CDG.

Since $N$ is the unique minimal normal subgroup of $H$, it follows that $F(H)$ is a $q$-group.  Observe that $D = F (H \times Z_r) = F(H) \times Z_r$ is a direct product of groups with coprime orders.  Let $F$ be the Fitting subgroup of $G$.  Since $E$ is a nilpotent normal subgroup of $G$, we have $E \le F$.  Because $N$ and $Z_r$ both act nontrivially on $E$ and have orders coprime to $|E|$, we see that $F \cap N = F \cap Z_r = 1$.  Note that $F \cap H$ is a normal subgroup of $H$, so $F \cap H > 1$ would imply that $N \le F \cap H$ which contradicts $F \cap N = 1$.  We deduce that $F \cap H = 1$.  Since $F \cap Z_r = 1$, we have $F \cap (H \times Z_r) = 1$.  Finally, as $G = E (H \times Z_r)$, we conclude that $F = E$.  If $H$ is solvable, then the fact that $G$ has Fitting height one more than the Fitting height of $H$ follows immediately.

Since $Z(E)$ is a normal subgroup of $G$ and has order $p$, it is a minimal normal subgroup of $G$.  Suppose $M$ is a minimal normal subgroup of $G$ that is not $Z(E)$.  If $M \cap E > 1$, then $M \cap E$ is a nontrivial normal subgroup of the nilpotent group $E$.  Thus, $M \cap E$ must intersect $Z (E)$ nontrivially.  Since $Z (E)$ has order $p$, this implies that $Z (E) \le M \cap E$.  By minimality of $M$, we have $Z (E) = M$ which contradicts the choice of $M$.  Note that if $M$ were solvable, then it would be nilpotent (in fact abelian) and contained in $E$; thus $M$ is not solvable.  Now, $1 < M \cong ME/E$ is a minimal normal subgroup of $G/E = HE/E \times EZ_r/E$.  If $ME/E \cap HE/E > 1$, then since $N$ is the unique minimal normal subgroup of $H$, we would have $NE/E \le ME/E$, and minimality would imply that $NE= ME$, and thus, $M$ is solvable which is a contradiction.  Thus, $ME/E \cap HE/E = 1$, and thus, $|ME/E|$ divides $|G:HE| = r$.  Again this implies $M$ is solvable, which is a contradiction.  We conclude that $Z (E)$ is the unique minimal normal subgroup of $G$.
\end{proof}

In Theorem \ref{construction}, the hypothesis that $H$ has a unique minimal normal subgroup is stronger than we really need.  One could weaken this hypothesis to require that $F (H)$ be a $q$-group.  In the proof, we then choose $V$ to be a module for $H$ with the property that no irreducible $F(H)$-submodule of $V$ is centralized by $F(H)$.

We also note that in the proof of Theorem 3.1 that if $V$ is chosen to be an irreducible, faithful module for $H$, then necessarily we have $C_V (N) = 0$ since $C_V (N)$ will be a proper $H$-submodule of $V$.

We now find CDGs with arbitrarily large Fitting heights by inductively applying Theorem \ref{construction}.  In particular, we are ready to prove Theorem \ref{main}.  This next result includes the Theorem \ref{main}.

\begin{theorem} \label{details}
There exists an infinite family of solvable CDGs $G_1, G_2, \dots$ so that $G_i$ has Fitting height $i + 1$ and has a unique minimal normal subgroup.  Furthermore, there exists an infinite family of solvable CDGs $G_1, G_2, \dots$ so that each $G_i$ satisfies the above conclusions and has a Sylow tower.
\end{theorem}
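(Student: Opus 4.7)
The plan is to apply Theorem \ref{construction} inductively, starting from a trivial CDG and adjoining one extra-special layer at each stage. Because that theorem preserves the CDG property, preserves the existence of a unique minimal normal subgroup, and raises Fitting height by exactly one, a single induction produces both families at once.

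For the base, take $G_0 = Z_{p_0}$ cyclic of some prime order $p_0$. It is abelian (hence vacuously a CDG), it is its own unique minimal normal subgroup (a $p_0$-group), it has Fitting height $1$, and it trivially has a Sylow tower. Now assume inductively that $G_{i-1}$ is a solvable CDG of Fitting height $i$ whose unique minimal normal subgroup is a $q$-group. Pick an odd prime $r_i \neq q$ and, by Dirichlet's theorem on primes in arithmetic progressions, a prime $p_i \equiv 1 \pmod{r_i}$ with $p_i \neq q$. Theorem \ref{construction} then produces $G_i = E_i \rtimes (G_{i-1} \times Z_{r_i})$, a solvable CDG of Fitting height $i+1$ whose unique minimal normal subgroup $Z(E_i)$ is a $p_i$-group. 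Indexing the family from $i = 1$ gives the first assertion.

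For the Sylow tower statement, I impose the additional constraint that at each step $p_i$ and $r_i$ avoid every prime used so far; Dirichlet's theorem still permits this. Then all primes dividing $|G_i|$ are distinct, so $E_i$ is the (normal) Sylow $p_i$-subgroup of $G_i$, and the quotient $G_i/E_i \cong G_{i-1} \times Z_{r_i}$ has a Sylow tower as the direct product of $G_{i-1}$, which has one by the inductive hypothesis, with the cyclic group $Z_{r_i}$ of a new prime order. Inserting $E_i$ at the bottom of that tower gives a Sylow tower of $G_i$.

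The one nontrivial verification is that Theorem \ref{construction} actually applies at each stage, which reduces to exhibiting an $\mathbb{F}_{p_i}[G_{i-1}]$-module $V$ with $C_V(N) = 0$ for $N$ the unique minimal normal subgroup of $G_{i-1}$. The remark following Theorem \ref{construction} says that any faithful irreducible module will do, and such a module can be constructed directly: start from a faithful irreducible $\mathbb{F}_{p_i}[N]$-module (available because $N$ is elementary abelian of order coprime to $p_i$) and induce it up to $G_{i-1}$; the induced module has $C_V(N) = 0$ because each $N$-conjugate summand in its restriction to $N$ is again faithful and hence has no trivial $N$-fixed vector. This is where the main technical content sits, but it is classical and routine; everything else in the argument is bookkeeping.
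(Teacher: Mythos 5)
Your proof is correct and follows the same overall strategy as the paper: iterate Theorem \ref{construction}, and obtain the Sylow tower by insisting that $p_i$ and $r_i$ avoid all primes already used, so that the extra-special group is a full normal Sylow $p_i$-subgroup and the tower lifts from $G_{i-1}\times Z_{r_i}$. The differences are at the two ends of the induction. For the base, the paper exhibits a concrete nonabelian CDG of Fitting height $2$ (an extra-special group of order $7^3$ extended by $Z_6$, with character degrees $\{1,6,21\}$), while you start from the abelian group $Z_{p_0}$ and let Theorem \ref{construction} manufacture $G_1$. This is legitimate under the paper's literal definition of CDG (abelian groups satisfy it vacuously, and they do have a unique minimal normal subgroup when of prime order), and the proof of Theorem \ref{construction} nowhere uses that $H$ is nonabelian, only that $\cd H$ contains no nontrivial prime power; be aware, though, that the paper's later lemma on three prime divisors tacitly assumes CDGs are nonabelian, so a careful reader will want the one-line remark that the construction's proof still goes through for abelian $H$. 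At the inductive step you also fill in a detail the paper leaves implicit, namely the existence of a module $V$ with $C_V(N)=0$. Your induced-module argument is fine, but your stated reason for the existence of a faithful irreducible $\mathbb{F}_{p_i}[N]$-module (``because $N$ is elementary abelian of order coprime to $p_i$'') is false in general: a noncyclic elementary abelian group has no faithful irreducible module over any field, since the image of an abelian group acting irreducibly is cyclic. The claim is rescued here only because in your induction $N$ is always $Z(E_{i-1})$ (or $Z_{p_0}$ at the start), hence of prime order and cyclic; that should be said explicitly. With that repair, everything checks out.
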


\begin{proof}
We prove the first conclusion by working via induction on $i$.  We start by finding a solvable CDG with Fitting height $2$.  We could choose one of the examples in Section 4 of \cite{DuLe}, however, we can find an easy example.  Let $E$ be an extra-special group of order $7^3$ and exponent $7$.  It is not difficult to see that $E$ has an automorphism $\alpha$ of order $2$ that inverts all the elements of $E/Z(E)$ and centralizes $Z(E)$.  Using Lemma \ref{extra}, we see that $E$ has an automorphism $\beta$ of order $3$ that acts Frobeniusly on $E$.  Also, it is easy to see that $\alpha$ and $\beta$ commute.  We take $G_1 = E \rtimes \langle \alpha \beta \rangle$.  It is easy to see that $\cd {G_1} = \{ 1, 6, 21 \}$, so $G_1$ is a CDG, and $Z(E)$ is the unique minimal normal subgroup of $G_1$.  Notice that $G_1$ has Fitting height $2$ and $Z(E)$ is a $7$-group.  Also, $G_1$ will have a Sylow tower.  This proves the base case.  We now prove the inductive step.  At the $i$th step, we have the solvable CDG $G_i$ which has Fitting height $i+1$ and a unique minimal normal subgroup.  Since $G_i$ is solvable, we know that this minimal normal subgroup will be a $q_i$-subgroup for some prime $q_i$.  We can then find primes $p_i$ and $r_i$ that are different from $q_i$ so that $r_i$ is odd and $r_i$ divides $p_i - 1$.  We apply Theorem \ref{construction} using $G_i$, $p_i$, $q_i$, and $r_i$ to obtain the CDG $G_{i+1}$ with Fitting height $i+2$ and having a unique minimal normal subgroup.  This proves the first conclusion.

To prove the second conclusion, we assume at each step that we choose the primes $p_i$ and $r_i$ so that they do not divide $|G_i|$.  Now, $G_{i+1}$ will have a normal Sylow $p_i$-subgroup $P_i$.  Also, we see that $G_{i+1}/P_i \cong G_i \times Z_{r_i}$.  In addition, $Z_{r_i}$ will be the normal Sylow $r_i$-subgroup of $G_i \times Z_{r_i}$ and $G_i \times Z_{r_i}/ Z_{r_i} \cong G_i$ which inductively has a Sylow tower.  Therefore, $G_{i+1}$ has a Sylow tower.
\end{proof}

Next, we give an easy proof that every CDG has order divisible by three distinct primes.

\begin{lemma}
If $G$ is a CDG, then $|G|$ is divisible by at least three distinct primes.
\end{lemma}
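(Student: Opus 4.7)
I would prove this by contradiction. Assume $G$ is a CDG (necessarily with a nonlinear irreducible character, else the statement would need to be read as excluding the trivial case) whose order is divisible by at most two primes. The first step is to dispose of the prime-power case: if $|G| = p^a$, then every entry of $\cd G$ is a power of $p$, so the CDG hypothesis forces every irreducible character to be linear, making $G$ abelian and contradicting the existence of a nonlinear character.

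So we may assume $|G| = p^a q^b$ for distinct primes $p,q$ with $a, b \geq 1$. Since every $\chi(1) \in \cd G$ divides $|G|$, any nonlinear $\chi(1)$ is a divisor of $p^a q^b$ that is not a nontrivial prime power, so it is divisible by both $p$ and $q$. In particular every nonlinear character of $G$ has degree divisible by $q$, and I would now invoke Thompson's theorem on normal $p$-complements (Thompson, ``Normal $p$-complements and irreducible characters''): if every nonlinear $\chi \in \irr G$ has degree divisible by $q$, then $G$ has a normal $q$-complement. In a $\{p,q\}$-group this normal $q$-complement is precisely the Sylow $p$-subgroup $P$, so $P$ is normal in $G$. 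By the symmetric argument applied to $p$, the Sylow $q$-subgroup $Q$ is also normal.

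With both Sylow subgroups normal of coprime orders, $G = P \times Q$, and hence $\cd G = \{\alpha(1)\beta(1) : \alpha \in \irr P,\ \beta \in \irr Q\}$. If $P$ were nonabelian, it would admit a nonlinear $\alpha$ with $\alpha(1)$ a nontrivial $p$-power, and then $\alpha \times 1_Q \in \irr G$ would have prime-power degree, contradicting CDG. Thus $P$ is abelian, and symmetrically $Q$ is abelian, so $G = P \times Q$ is abelian, contradicting the existence of a nonlinear character. Therefore $|G|$ must be divisible by at least three distinct primes. The only nontrivial ingredient is Thompson's theorem, applied once per prime; I do not anticipate a serious obstacle beyond citing it correctly.
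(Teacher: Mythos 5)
Your proof is correct, but it takes a genuinely different route from the paper's. The paper disposes of the nonsolvable case with Burnside's $p^aq^b$-theorem, then in the solvable case chooses $M$ maximal with $G/M$ nonabelian and applies Lemma 12.3 of Isaacs: such a quotient is either a $p$-group (impossible for a CDG) or a Frobenius group, and in the Frobenius case the order of the complement is a character degree coprime to the order of the kernel, which forces three prime divisors of $|G:M|$. You instead reduce to $|G| = p^a q^b$, observe that every nonlinear degree divides $|G|$ and hence must be divisible by both $p$ and $q$, and apply Thompson's normal $p$-complement theorem once for each prime to conclude that both Sylow subgroups are normal; then $G = P \times Q$ forces both factors, and hence $G$, to be abelian. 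Your argument is uniform --- no solvable/nonsolvable split and no appeal to Burnside --- at the cost of invoking Thompson's theorem (Corollary 12.2 of Isaacs) twice plus the fact that character degrees divide the group order, whereas the paper leans on the structural Lemma 12.3; both tools sit in the same chapter of Isaacs, so neither proof is meaningfully more elementary. You were also right to flag the implicit nonabelian hypothesis: an abelian group satisfies the literal definition of a CDG but can have prime order, and the paper's proof likewise tacitly assumes a nonabelian quotient exists.
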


\begin{proof}
If $G$ is not solvable, then this is immediate by Burnside's $p^a q^b$-theorem.  Thus, we may assume $G$ is solvable, and we let $M$ be maximal so that $G/M$ is nonabelian.  We know by Lemma 12.3 of \cite{text} that either $G/M$ is a $p$-group for some prime $p$ or $G/M$ is a Frobenius group.  However, if $G/M$ were a $p$-group, then it would have a prime power character degree other than $1$.  Thus, $G/M$ is a Frobenius group with Frobenius complement $N/M$.  We know that $|G:N|$ and $|N:M|$ are relatively and $|G:N|$ is a character degree, so $|G:N|$ is divisible by at least two distinct primes, and so, $|G:M|$ is divisible by three distinct primes.
\end{proof}

We now show that we can find a CDG whose order is divisible by only three distinct primes and has arbitrarily large Fitting height.

\begin{theorem}
There exist three distinct primes $p_1$, $p_2$, and $r$, and an infinite family of solvable CDG's $G_1, G_2, \dots$, so that $G_i$ has Fitting height $i+1$ and is a $\{ p_1, p_2, r \}$-group.
\end{theorem}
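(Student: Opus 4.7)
The plan is to iterate Theorem \ref{construction} while recycling just two primes $p_1, p_2$ for the successive extra-special layers and a single odd prime $r$ for the Frobenius action. Each application of Theorem \ref{construction} adds at most the new prime $p$ and the prime $r$ to the prime set of the resulting group, so if we toggle $p$ between $p_1$ and $p_2$ at each stage, only three primes ever appear. Concretely, I would first pick an odd prime $r$ and two distinct odd primes $p_1, p_2$, both different from $r$, satisfying $r \mid p_1 - 1$ and $r \mid p_2 - 1$; infinitely many such triples exist by Dirichlet's theorem, and $(r, p_1, p_2) = (3, 7, 13)$ is an explicit choice.

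For the base case, take $H = Z_{p_2}$, the cyclic group of order $p_2$. Since $H$ is abelian, $\cd H = \{1\}$ and $H$ is a CDG whose unique minimal normal subgroup is $H$ itself, a $p_2$-group. Applying Theorem \ref{construction} to this $H$ with ground prime $q = p_2$, new prime $p = p_1$, and the chosen $r$ produces a solvable CDG $G_1$ of Fitting height $2$, with unique minimal normal subgroup a $p_1$-group, and with $|G_1|$ involving only the primes $\{p_1, p_2, r\}$.

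Inductively, suppose $G_i$ is a solvable CDG of Fitting height $i+1$ whose unique minimal normal subgroup is a $q_i$-group for some $q_i \in \{p_1, p_2\}$ and whose order is supported on $\{p_1, p_2, r\}$. Let $p$ be the element of $\{p_1, p_2\}$ distinct from $q_i$, and apply Theorem \ref{construction} with $H = G_i$, ground prime $q_i$, new prime $p$, and Frobenius prime $r$. All hypotheses hold: $p \ne q_i$, $r$ is odd, $r \ne q_i$ because $r \notin \{p_1, p_2\}$, and $r \mid p - 1$ by the initial choice. The resulting $G_{i+1}$ is then a solvable CDG of Fitting height $i+2$ with unique minimal normal subgroup a $p$-group, and only primes in $\{p_1, p_2, r\}$ divide $|G_{i+1}|$. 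This closes the induction.

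The main obstacle is really just the initial combinatorial choice of primes: the fixed Frobenius prime $r$ must divide both $p_1 - 1$ and $p_2 - 1$ so that the same $r$ can be reused at every stage regardless of which of $p_1, p_2$ is currently serving as the extra-special prime. This is the one genuinely new constraint compared with Theorem \ref{details}, and once it is arranged, the rest of the argument is a direct iteration of Theorem \ref{construction} seeded by the trivial CDG $Z_{p_2}$.
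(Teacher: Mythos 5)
Your proposal is correct and follows essentially the same route as the paper: choose $r$ dividing both $p_1-1$ and $p_2-1$ (the paper also offers $(7,13,3)$) and alternate the roles of $p_1$ and $p_2$ in Theorem \ref{construction}, reusing the same $r$ at every stage. The only difference is cosmetic: you obtain the base case $G_1$ by applying Theorem \ref{construction} once to the cyclic CDG $Z_{p_2}$, whereas the paper builds $G_1 = E(V) \rtimes (Z_{p_1} \times Z_r)$ by hand and verifies $\cd{G_1}$ directly; both are valid.
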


\begin{proof}
Let $r$ be an odd prime, let $p_1$ and $p_2$ be distinct primes so that $r$ divides both $p_1 - 1$ and $p_2 - 1$.  Many such triples of primes exist.  One possibility is $(p_1,p_2,r) = (7,13,3)$.  Let $k$ be the field of order $p_2$, and let $V$ be an irreducible $k[Z_{p_1}]$ module.  By Lemma \ref{extra}, we know that $Z_{p_1} \times Z_r$ acts via automorphisms on $E (V)$ so that $Z_{p_1}$ centralizes $Z(E (V))$ and $Z_r$ acts Frobeniusly on $E (V)$.  Take $G_1 = E (V) \rtimes (Z_{p_1} \times Z_r)$.  It is not difficult to see that $\cd {G_1} = \{ 1, r p_1, r(p_2)^n \}$ where $n$ is the dimension of $V$, $G_1$ is a $\{ p_1, p_2, r \}$-group, and $G_1$ has a unique minimal normal subgroup that happens to be a $p_2$-group.  This is the base case for induction.   Continuing inductively, we will have $G_i$ is a $\{ p_1, p_2, r \}$-group with Fitting height $i + 1$ and a unique minimal normal subgroup that will be a $p_1$-group when $i$ is even and a $p_2$-group when $i$ is odd.  We will apply Theorem \ref{construction} with $p = p_2$ and $q = p_1$ when $i$ is even and $p = p_1$ and $q = p_2$ when $i$ is odd and $r = r$ for all $i$ to obtain $G_{i+1}$.  We see that $G_{i+1}$ is also a $\{ p_1, p_2, r \}$-group, it has Fitting height $i+2$ and a unique minimal normal subgroup that will be a $p_1$-subgroup when $i+1$ is even (i.e, $i$ is odd) and a $p_2$-subgroup when $i+1$ is odd (i.e., $i$ is even).  This yields the desired result.
\end{proof}

\end{document}